\newtheorem{lemma}{Lemma}
\newtheorem{theorem}[lemma]{Theorem}
\newtheorem{proposition}[lemma]{Proposition}
\theoremstyle{definition}
\newtheorem{example}[lemma]{Example}
\newcommand{\ra}{\rightarrow}
\newcommand{\al}[1]{\mathbf{#1}}
\newcommand{\el}{$\ell$-}
\newcommand{\cd}{\cdot}
\newcommand{\bs}{\backslash}
\newcommand{\xal}[2]{\mathcal{#1}(\mathbf{#2})}
\newcommand{\xxal}[3]{\mathcal{#1}(\mathcal{#2}(\mathbf{#3}))}
\begin{document}

\title[A note on residuated po-groupoids]{A note on residuated po-groupoids and lattices with antitone involutions}

\author[I. Chajda \and J. K\"{u}hr]{Ivan Chajda* \and Jan K\"{u}hr**}

\newcommand{\acr}{\newline\indent}

\address{\llap{*\,}Department of Algebra and Geometry\acr
Faculty of Science\acr
Palack\'{y} University in Olomouc\acr
17. listopadu 12, CZ-77146 Olomouc\acr
Czech Republic}
\email{ivan.chajda@upol.cz}

\address{\llap{**\,}Department of Algebra and Geometry\acr
Faculty of Science\acr
Palack\'{y} University in Olomouc\acr
17. listopadu 12, CZ-77146 Olomouc\acr
Czech Republic}
\email{jan.kuhr@upol.cz}

\thanks{Supported by the bilateral project ``New perspectives on residuated posets'' of the Austrian Science Fund (FWF): project I 1923-N25, and the Czech Science Foundation (GA\v{C}R): project 15-34697L.
Partially supported by the projects ``Mathematical structures'' of  the Palack\'{y} University: projects IGA PrF 2014016 and IGA PrF 2015010.}

\subjclass[2010]{Primary 03G10, 03G25}
\keywords{Basic algebra; lattice with antitone involutions; left-residuated po-groupoid.}

\begin{abstract}
Following 
[BOTUR, M.---CHAJDA, I.---HALA\v{S}, R.: \textit{Are basic algebras residuated structures?}, Soft Comput. \textbf{14} (2010), 251--255]
we discuss the connections between left-residuated partially ordered groupoids and the so-called basic algebras, which are a non-commutative and non-associative generalization of MV-algebras and orthomodular lattices.
\end{abstract}


\noindent
This is a preprint of an article published in Mathematica Slovaca. 
The final publication is available at \url{www.degruyter.com} and \url{https://doi.org/10.1515/ms-2016-0289}.

\newpage


\maketitle

This short note is in a sense a continuation of \cite{BoChHa} where M.~Botur, R.~Hala\v{s} and the first author studied the connections between bounded integral left-residuated lattice-ordered groupoids and the so-called basic algebras, which are a counterpart of bounded lattices with antitone involutions.
We try to recast and simplify these results and their proofs, and we present an alternative characterization of basic algebras in the setting of bounded integral left-residuated partially ordered groupoids.

Basic algebras were introduced by R.~Hala\v{s} and the authors in \cite{ChHaKu1} 
as an attempt at a common generalization of MV-algebras and orthomodular lattices and can be regarded as a non-commutative and non-associative extension of MV-algebras. 
Formally, a \emph{basic algebra}\footnote{We admit that the name ``basic algebras'' is unfortunate; it is meant only to indicate that these algebras are a common ``base'' for MV-algebras and orthomodular lattices, and it does not establish any link with H\'{a}jek's basic fuzzy logic and BL-algebras.} is an algebra $\al A=(A,\oplus,\neg,0)$ of type $(2,1,0)$ satisfying the equations\footnote{The original definition consisted of \eqref{ba1}--\eqref{ba4} and the equations $x\oplus \neg 0=\neg 0=\neg 0\oplus x$ which turned out to be redundant; see \cite{ChKo}.}
\begin{gather}
\label{ba1}
x\oplus 0=x,\\
\neg\neg x=x,\\
\label{ba3}
\neg(\neg x\oplus y)\oplus y=\neg(\neg y\oplus x)\oplus x,\\
\label{ba4}
\neg(\neg(\neg(x\oplus y)\oplus y)\oplus z)\oplus (x\oplus z)=\neg 0.
\end{gather}
Comparing these axioms with the axioms of MV-algebras (see, e.g., \cite{CDM}), it is not hard to see that MV-algebras are precisely the commutative and associative basic algebras.\footnote{In fact, MV-algebras are precisely the associative basic algebras since commutativity can be easily derived from \eqref{ba1}--\eqref{ba4} and associativity; see \cite{ChKu2} though this observation is due to M.~Kola\v{r}\'{i}k.}
Another non-associative generalization of MV-algebras that we studied earlier are NMV-algebras (see \cite{ChKu1}). 

If we put $1=\neg 0$ and define $x\leq y$ iff $\neg x\oplus y=1$, then $(A,\leq,0,1)$ is a bounded lattice with the property that, for every $a\in A$, $\gamma_a\colon x\mapsto \neg x\oplus a$ is an antitone involution on the principal filter $[a)$, and dually, $\delta_a\colon x\mapsto \neg (x\oplus\neg a)$ is an antitone involution on the principal ideal $(a]$. The lattice operations are given by $x\vee y=\neg (\neg x\oplus y)\oplus y$ and $x\wedge y=\neg (\neg x\vee\neg y)$.
Hence, basic algebras are closely related to the concept of a (\emph{bounded}) \emph{lattice with} (\emph{sectional}) \emph{antitone involutions} that appeared already in \cite{ChHaKu2005} and \cite{ChEm}. Originally, we considered bounded lattices with antitone involutions on principal filters, but it is quite obvious that when we are given antitone involutions on all principal filters, we are also given antitone involutions on all principal ideals, and vice versa.

Now, let $\al L=(L,\leq,0,1)$ be a bounded lattice with associated lattice operations $\vee$ and $\wedge$. If $\gamma_a$ ($a\in L$) are antitone involutions on the principal filters $[a)\subseteq L$, then the algebra $(L,\oplus,\neg,0)$, where 
\begin{equation}\label{baoperace1}
\neg x=\gamma_0(x) \quad\text{and}\quad x\oplus y=\gamma_y(\neg x\vee y),
\end{equation}
is a basic algebra whose underlying lattice is $\al L$ and where the induced antitone involutions on principal filters are just the $\gamma_a$'s.
Dually, if $\delta_a$ ($a\in L$) are antitone involutions on the principal ideals $(a]\subseteq L$, then the algebra $(L,\oplus,\neg,0)$ defined by
\begin{equation}\label{baoperace2}
\neg x=\delta_1(x) \quad\text{and}\quad x\oplus y=\neg \delta_{\neg y}(x\wedge\neg y)
\end{equation}
is a basic algebra whose underlying lattice is $\al L$ and where the induced antitone involutions on principal ideals are just the $\delta_a$'s.

In other words, any basic algebra is completely determined by its underlying lattice together with the antitone involutions. When we want to make a given structure into a basic algebra, it suffices to show that the structure is a bounded lattice with antitone involutions on all principal filters or ideals and then define the operations $\neg$ and $\oplus$ by \eqref{baoperace1} or \eqref{baoperace2}, respectively.

For instance, if $\al L=(L,\vee,\wedge,{}^\perp,0,1)$ is an orthomodular lattice, then for every $a\in L$, $\gamma_a\colon x\mapsto x^\perp\vee a$ is an antitone involution on $[a)$, hence by defining $\neg x=\gamma_0(x)=x^\perp$ and $x\oplus y=\gamma_y(\neg x\vee y)=(x^\perp\vee y)^\perp\vee y=(x\wedge y^\perp)\vee y$ we obtain a basic algebra. Note that $x\oplus y=x\vee y$ only if $\al L$ is a Boolean algebra.
Dually, $\delta_a\colon x\mapsto x^\perp\wedge a$ is an antitone involution on $(a]$, hence the corresponding basic algebra is defined by $\neg x=\delta_1(x)=x^\perp$ and $x\oplus y=\neg\delta_{\neg y}(x\wedge\neg y)=((x\wedge y^\perp)^\perp\wedge y^\perp)^\perp=(x\wedge y^\perp)\vee y$.
Orthomodular lattices are equivalent to basic algebras satisfying the quasi-equation $x\leq y$ $\Rightarrow$ $y\oplus x=y$, which is a direct translation of the orthomodular law $x\leq y$ $\Rightarrow$ $x\vee (x^\perp\wedge y)=y$.
However, this equivalence does not mean that a basic algebra fulfills the quasi-equation if and only if its underlying lattice equipped with $\neg$ is an orthomodular lattice; see \cite{ChKu2}.

For details and more information on basic algebras, besides the aforementioned papers, we refer the reader to \cite{ChKu3,KrKu,BoKu}; a short introduction can be also found in \cite{ChHaKu-book}.

The second basic concept is that of a bounded integral left-residuated partially ordered or lattice-ordered groupoid. A good reference to residuated structures is the monograph \cite{GJKO}.
A (\emph{bounded integral}) \emph{left-residuated po-groupoid} is a structure $\al G=(G,\leq,\cd,/,0,1)$ where $(G,\leq,0,1)$ is a bounded poset, $(G,\cd,1)$ is a groupoid with identity $1$, and $/$ is a binary operation on $G$ satisfying the left residuation law, for all $x,y,z\in G$:
\begin{equation}\label{lres}
x\cd y\leq z \quad\Leftrightarrow\quad x\leq z/y.
\end{equation}
If the poset is a lattice, we say that $\al G$ is a (\emph{bounded integral}) \emph{left-residuated \el groupoid}.\footnote{We are slightly abusing the terminology here. Usually, the definition of a po-groupoid requires that $\leq$ is preserved by multiplication on both sides, while the left residuation law \eqref{lres} only entails that $\leq$ is preserved by multiplication on the right. Similarly, the definition of an \el groupoid usually requires that multiplication distributes over $\vee$ from both sides, but \eqref{lres} only entails distributivity from the right. Thus we should more correctly speak of \emph{right} po-groupoids and \emph{right} \el groupoids.}
We do not assume anything else about the multiplication, it is neither commutative nor associative in general. Though it is usual to write $x\ra y$ instead of $y/x$ when the groupoid is commutative, we reserve $\ra$ for the implication in basic algebras; see below.

From now on, since all left-residuated po-groupoids (or \el groupoids) we consider in this paper are bounded and integral, we will omit the adjectives ``bounded integral''.

Now, given an arbitrary basic algebra $\al A=(A,\oplus,\neg,0)$, we define the following term operations---multiplication, implication and division:
\begin{equation}\label{baoperace3}
x\cd y=\neg (\neg x\oplus\neg y),\quad x\ra y=\neg x\oplus y \quad\text{and}\quad x/y=x\oplus\neg y.
\end{equation}
The lattice operations in the underlying lattice of $\al A$ are then expressed by
\begin{equation}\label{svazoperace}
x\vee y=(x\ra y)\ra y \quad\text{and}\quad x\wedge y=(x/y)\cd y.
\end{equation}
It is worth observing that in terms of the antitone involutions $\gamma_a\colon x\mapsto\neg x\oplus a$ on $[a)$ and $\delta_a\colon x\mapsto\neg (x\oplus\neg a)$ on $(a]$ we have:
$x\cd y=\neg \gamma_{\neg y}(x\vee\neg y) = \delta_y(\neg x\wedge y)$, 
$x\ra y=\gamma_y(x\vee y)=\neg\delta_{\neg y}(\neg x\wedge\neg y)$
and $x/y=\gamma_{\neg y}(\neg x\vee\neg y)=\neg \delta_y(x\wedge y)$.
It easily follows that $x\cd y\leq z$ iff $x\leq z/y$, for all $x,y,z\in A$. 
Therefore, the structure $\xal G A = (A,\leq,\cd,/,0,1)$ is a left-residuated \el groupoid, and the initial basic algebra $\al A$ can be retrieved from $\xal G A$ by $\neg x=0/x$ and $x\oplus y=\neg (\neg x\cd\neg y)=x/\neg y$.

This simple fact raises the question which left-residuated po-groupoids (or \el groupoids) correspond to basic algebras in this way, or in other words, under what conditions can a left-residuated po-groupoid be made into a basic algebra? This problem, especially in the commutative case, was tackled in \cite{BoChHa}. It was proved therein that if $\al G=(G,\leq,\cd,/,0,1)$ is a left\footnote{We should make a comment on terminology and notation. What we now call ``left-residuated'' was called ``right-residuated'' in \cite{BoChHa} and our $x/y$ was denoted by $y\ra x$, thus the residuation law of \cite{BoChHa} was as follows: $x\cd y\leq z$ iff $x\leq y\ra z$.}-residuated \el groupoid satisfying 
\begin{enumerate}[\indent (a)]
\item
the divisibility law $(x/y)\cd y=x\wedge y$, 
\item
the double negation law $\neg\neg x=x$ with negation defined by $\neg x=0/x$, and 
\item
the condition called CAP, for all $x,y,z\in G$:
\begin{equation}\label{cap}
\neg x/y\leq \neg z \quad\Leftrightarrow\quad z\leq x\cd y,
\end{equation}
\end{enumerate}
then the algebra $\xal A G = (G,\oplus,\neg,0)$, where $x\oplus y=\neg (\neg x\cd\neg y)$, is a basic algebra. Since the left-residuated \el groupoid $\xxal G A G$ associated with $\xal A G$ is just $\al G$, basic algebras are equivalent to left-residuated \el groupoids satisfying (a), (b) and (c).\footnote{Actually, it is implicit in \cite{BoChHa} that for any basic algebra $\al A$, $\xal G A$ is a left-residuated \el groupoid such that $\xxal A G A = \al A$; the proof is given for commutative basic algebras only.}  

Our first goal is to show that we do not need to assume that $\al G$ is lattice-ordered (cf. Lemma~\ref{lemma1} below) and that the somewhat opaque condition \eqref{cap} may be replaced with a single simple equation which also captures the law of double negation (cf. Lemma~\ref{lemma2}).

When doing calculations, we will use the following rules that are simple consequences of \eqref{lres}:

\begin{lemma}
In any left-residuated po-groupoid $\al G$, for all $x,y,z\in G$:
\begin{enumerate}[\indent\upshape (a)]
\item $(x/y)\cdot y\leq x\leq (x\cdot y)/y$;
\item $x\leq y$ iff $y/x=1$;
\item $x\leq y$ implies $x\cd z\leq y\cd z$;
\item $x\cdot y\leq y$;
\item $x/x=1/x=1$ and $x/1=x$;
\item $x\leq y$ implies $x/z\leq y/z$;
\item $x\cdot 0=0=0\cdot x$;
\item $\big(\bigvee_{i\in I} x_i\big)\cd y=\bigvee_{i\in I} (x_i\cd y)$ if $\bigvee_{i\in I} x_i$ exists;
\item $\big(\bigwedge_{i\in I} x_i\big)/y=\bigwedge_{i\in I} (x_i/y)$ if $\bigwedge_{i\in I} x_i$ exists.
\end{enumerate}
\end{lemma}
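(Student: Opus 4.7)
The plan is to derive everything from the left residuation law~\eqref{lres} together with the facts that $0$ is the bottom, $1$ is the top, and $1$ is the multiplicative identity. The law~\eqref{lres} translates between a multiplicative and a divisional inequality, and applying it to a handful of reflexivity instances yields the entire list with essentially no computation.

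First I would establish (a) by applying~\eqref{lres} to the trivially true inequalities $x/y\leq x/y$ and $x\cdot y\leq x\cdot y$. Then (b) follows from~\eqref{lres} with $x:=1$: since $1\cdot x=x$ and $1$ is the top, $x\leq y$ amounts to $1\leq y/x$, i.e., $y/x=1$. Left-monotonicity (c) comes from $y\leq (y\cdot z)/z$ (which is~(a)) combined with $x\leq y$ and~\eqref{lres}; dually, (f) comes from $(x/z)\cdot z\leq x\leq y$ via~\eqref{lres}. Item (d) is immediate from $x\leq 1$ together with (c). For (e), the equalities $x/x=1$ and $1/x=1$ fall out of (b) applied to the trivial inequalities $x\leq x$ and $x\leq 1$, while $x/1=x$ follows by combining $x\leq x/1$ (from $x\cdot 1=x$ and~\eqref{lres}) with $x/1=(x/1)\cdot 1\leq x$ (from~(a)).

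The one place that deserves a brief word of caution is (g). Since $\al G$ is only left-residuated, we do not a priori have monotonicity of multiplication in the right argument, so we cannot simply argue from $0\leq x$. Instead, we use~\eqref{lres} directly: $x\cdot 0\leq 0$ is equivalent to $x\leq 0/0=1$, which holds; and $0\cdot x\leq 0$ is equivalent to $0\leq 0/x$, which is immediate because $0$ is the bottom. Both then collapse to equalities.

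Finally, (h) and (i) express that $\cdot$ preserves existing joins in the left argument and $/$ preserves existing meets in the numerator. For (h), part~(c) shows that $(\bigvee_{i} x_i)\cdot y$ is an upper bound of $\{x_i\cdot y\}$; and if $z$ is any upper bound, then $x_i\leq z/y$ for every $i$ by~\eqref{lres}, hence $\bigvee_i x_i\leq z/y$, and so $(\bigvee_i x_i)\cdot y\leq z$ by~\eqref{lres} again. The proof of~(i) is the obvious dual, using~(f) in place of~(c). No step is genuinely difficult; the lemma is a disciplined bookkeeping exercise in~\eqref{lres}, and the only real obstacle is remembering, in~(g), that right-monotonicity is not available and one must route through residuation.
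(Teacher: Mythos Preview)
Your proof is correct in every item and cleanly organized; the only slightly delicate point, namely that right-monotonicity is unavailable for (g), you handle correctly via residuation. The paper itself omits the proof entirely, merely noting that these rules are ``simple consequences of~\eqref{lres}'', so your argument is a faithful expansion of what the authors leave implicit.
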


\begin{lemma}\label{lemma1}
Let $\al G$ be a left-residuated po-groupoid. The following are equivalent:
\begin{enumerate}[\indent\upshape (a)]
\item $\al G$ satisfies the divisibility law
\begin{equation}\label{div}
(x/y)\cdot y=(y/x)\cdot x\text{;}
\end{equation}
\item for any $x,y\in G$, $x\leq y$ iff $x=z\cdot y$ for some $z\in G$;
\item the underlying poset of $\al G$ is a meet-semilattice in which $x\wedge y=(x/y)\cdot y$.
\end{enumerate}
\end{lemma}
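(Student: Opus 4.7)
The plan is a cyclic implication (a)$\Rightarrow$(b)$\Rightarrow$(c)$\Rightarrow$(a), since each step is short and uses only the previous lemma together with left residuation \eqref{lres}.

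For (a)$\Rightarrow$(b): given $x\le y$, I would set $z:=x/y$ and compute $z\cd y=(x/y)\cd y=(y/x)\cd x$ by \eqref{div}; since $x\le y$ gives $y/x=1$ by part (b) of the previous lemma, this equals $1\cd x=x$. Conversely, $x=z\cd y$ forces $x\le y$ by part (d) of the previous lemma, giving the ``only if'' direction for free.

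For (b)$\Rightarrow$(c): I would show $(x/y)\cd y$ is the infimum of $\{x,y\}$. It is a lower bound because $(x/y)\cd y\le x$ by (a) of the previous lemma and $(x/y)\cd y\le y$ by (d). For the universal property, take any $z$ with $z\le x$ and $z\le y$; by hypothesis (b), $z\le y$ means $z=w\cd y$ for some $w\in G$. Then $w\cd y=z\le x$, so by left residuation \eqref{lres} we get $w\le x/y$, and monotonicity (part (c) of the previous lemma) gives $z=w\cd y\le (x/y)\cd y$, as required.

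For (c)$\Rightarrow$(a): this is essentially a one-liner—the meet is commutative, so $(x/y)\cd y=x\wedge y=y\wedge x=(y/x)\cd x$.

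The only step that needs a moment of thought is (b)$\Rightarrow$(c), because one must actually produce an infimum from scratch; but the decomposition $z=w\cd y$ handed to us by (b) is tailor-made to feed into the residuation law, so no real obstacle arises. The rest is bookkeeping with the basic identities already catalogued in the preceding lemma.
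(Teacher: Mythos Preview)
Your proof is correct and follows essentially the same cyclic route as the paper's own argument: the same witness $z=x/y$ in (a)$\Rightarrow$(b), the same decomposition $z=w\cdot y$ fed into residuation for (b)$\Rightarrow$(c), and the same appeal to commutativity of $\wedge$ for (c)$\Rightarrow$(a). The only difference is cosmetic---you cite the parts of the preceding lemma more explicitly than the paper does.
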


\begin{proof}
(a) $\Rightarrow$ (b). If $x\leq y$, then $y/x=1$ and $x=1\cdot x=(y/x)\cdot x = (x/y)\cdot y$. Conversely, if $x=z\cdot y$ for some $z$, then $x\leq y$.

(b) $\Rightarrow$ (c). Clearly, $(x/y)\cdot y$ is a lower bound of $\{x,y\}$. If $z$ is another lower bound of $\{x,y\}$, there exists $u\in G$ such that $z=u\cdot y\leq x$. Then $u\leq x/y$ and $z=u\cdot y \leq (x/y)\cdot y$.
Thus $(x/y)\cdot y=\inf\{x,y\}$.

(c) $\Rightarrow$ (a). This is obvious.
\end{proof}

\begin{lemma}\label{lemma2}
A left-residuated po-groupoid $\al G$ satisfies the law of double negation and the condition \eqref{cap} if and only if 
it satisfies the equation 
\begin{equation}\label{jk}
x\cd y=\neg (\neg x/y).
\end{equation}
In this case, for all $x,y\in G$, we have $x\leq y$ iff $\neg y\leq\neg x$.
\end{lemma}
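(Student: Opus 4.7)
The plan is to drive both implications by setting $y=1$ in the given hypothesis: since $x\cd 1=x$ and $\neg x/1=\neg x$ (using $x/1=x$ from Lemma~1(e)), this specialization pulls out exactly the information we need, and the rest is elementary manipulation with $\neg$.

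For the forward direction, I assume $\neg\neg x=x$ and \eqref{cap}. Putting $y=1$ in \eqref{cap} yields $\neg x\leq\neg z\Leftrightarrow z\leq x$, which together with double negation is already the final claim $x\leq y\Leftrightarrow\neg y\leq\neg x$. To derive \eqref{jk} I apply \eqref{cap} twice. Setting $z=\neg(\neg x/y)$ makes its left side the tautology $\neg x/y\leq\neg x/y$ (by double negation), so the right side forces $\neg(\neg x/y)\leq x\cd y$. Setting $z=x\cd y$ makes its right side trivial, so the left side forces $\neg x/y\leq\neg(x\cd y)$, which by the antitone equivalence $\neg a\leq\neg b\Leftrightarrow b\leq a$ just obtained converts to $x\cd y\leq\neg(\neg x/y)$.

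For the backward direction, I assume \eqref{jk}. Setting $y=1$ gives $x=x\cd 1=\neg(\neg x/1)=\neg\neg x$, so double negation holds; applying $\neg$ to \eqref{jk} then yields $\neg x/y=\neg(x\cd y)$. I next verify that $\neg$ is antitone by a second use of \eqref{jk}: with $x$ replaced by $\neg y$ and $y$ replaced by $x$, it becomes $\neg y\cd x=\neg(y/x)$, and if $x\leq y$ then $y/x=1$ by Lemma~1(b), so $\neg y\cd x=\neg 1=0$, whence $\neg y\leq 0/x=\neg x$ by residuation. Combined with double negation this establishes the equivalence $a\leq b\Leftrightarrow\neg b\leq\neg a$, and \eqref{cap} is now precisely this equivalence with $a=x\cd y$, using $\neg x/y=\neg(x\cd y)$.

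No individual step is intrinsically difficult; the only care required is the bookkeeping of double negation when switching between $\neg x/y$ and $\neg(x\cd y)$, and making sure one knows antitonicity of $\neg$ before invoking it (in the forward direction it drops out of $y=1$, in the backward direction it requires the secondary substitution $x\mapsto\neg y$, $y\mapsto x$ into \eqref{jk}).
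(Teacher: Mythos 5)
Your proof is correct and follows essentially the same route as the paper's: specialize at $y=1$ to obtain double negation and the antitone equivalence $\neg a\leq\neg b\Leftrightarrow b\leq a$, then translate between \eqref{cap} and \eqref{jk} using that equivalence (the paper concludes $\neg(\neg x/y)=x\cd y$ from equality of lower cones where you instantiate $z$ at the two elements, but this is the same argument). No gaps.
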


\begin{proof}
Suppose that $\al G$ satisfies $\neg\neg x=x$ and \eqref{cap}.
Letting $y=1$ in \eqref{cap} we get $\neg x\leq\neg z$ iff $z\leq x$ for all $x,z\in G$. Hence \eqref{cap} can be rewritten as follows: for any fixed $x,y\in G$, $z\leq\neg (\neg x/y)$ iff $z\leq x\cd y$ for all $z\in G$. This means $\neg (\neg x/y)=x\cd y$.

Conversely, suppose that $\al G$ satisfies \eqref{jk}. For every $x\in G$ we have $x=x\cd 1=\neg(\neg x/1)=\neg\neg x$.
Hence if $x\leq y$, then $0=\neg 1=\neg (y/x)=\neg y\cdot x$ by \eqref{jk}. 
But $\neg y\cdot x\leq 0$ iff $\neg y\leq 0/x=\neg x$, and so $x\leq y$ implies (in fact, is equivalent to) $\neg y\leq\neg x$.
Now, for any $x,y,z\in G$ we have $z\leq x\cd y=\neg(\neg x/y)$ iff $\neg z\geq\neg x/y$, which is \eqref{cap}.
\end{proof}

\begin{theorem}\label{thm1}
The mutually inverse assignments $\al A\mapsto \xal G A$ and $\al G\mapsto \xal A G$ establish an equivalence between basic algebras and (bounded integral) left-resi\-du\-ated po-groupoids satisfying the equations \eqref{div} and \eqref{jk}.
\end{theorem}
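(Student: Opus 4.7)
The plan is to verify three things: that $\al A \mapsto \xal G A$ lands in the stated class of po-groupoids, that $\al G \mapsto \xal A G$ lands in the class of basic algebras, and that the two maps are mutually inverse. For the forward direction $\al A \mapsto \xal G A$, the discussion preceding Lemma~\ref{lemma1} already establishes that $\xal G A$ is a left-residuated \el groupoid, so only \eqref{div} and \eqref{jk} are new to check. Divisibility \eqref{div} is immediate from $x \wedge y = (x/y)\cd y$ (one half of \eqref{svazoperace}) together with commutativity of the lattice meet. For \eqref{jk}, equation \eqref{baoperace3} gives $\neg x / y = \neg x \oplus \neg y$, and hence $\neg(\neg x / y) = \neg(\neg x \oplus \neg y) = x \cd y$ in a single rewrite.

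For the reverse direction, let $\al G$ be a left-residuated po-groupoid satisfying \eqref{div} and \eqref{jk}. I would invoke Lemma~\ref{lemma1} to obtain that the underlying poset is a meet-semilattice with $x \wedge y = (x/y) \cd y$, and Lemma~\ref{lemma2} to obtain double negation, the condition \eqref{cap}, and antitonicity of $\neg$. Since $\neg$ is then an antitone involution on $G$, joins exist and are given by the De Morgan formula $x \vee y = \neg(\neg x \wedge \neg y)$, so the underlying poset is automatically a lattice. Hence $\al G$ is a left-residuated \el groupoid satisfying conditions (a), (b), (c) from \cite{BoChHa}, and the result proved therein yields that $\xal A G$ is a basic algebra.

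For mutual inverseness, $\xxal A G A = \al A$ is already noted in the text preceding the theorem. For $\xxal G A G = \al G$, writing $\cd'$ and $/'$ for the operations of $\xxal G A G$, I would compute $x \cd' y = \neg(\neg x \oplus \neg y) = \neg\neg(x \cd y) = x \cd y$ using double negation, and $x /' y = x \oplus \neg y = \neg(\neg x \cd \neg\neg y) = \neg(\neg x \cd y)$, which by \eqref{jk} equals $\neg\neg(x/y) = x/y$. The main conceptual point, rather than a genuine obstacle, is the observation that \eqref{jk} alone forces $\neg$ to be an antitone involution, which then upgrades the meet-semilattice of Lemma~\ref{lemma1} to a full lattice --- exactly the improvement over \cite{BoChHa} that makes the lattice-ordering hypothesis unnecessary.
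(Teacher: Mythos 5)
Your proposal is correct, and its skeleton (reduce via Lemmas~\ref{lemma1} and~\ref{lemma2}, then check the two composites) matches the paper's opening sentence, which notes that the theorem is ``just a reformulation of the quoted results of \cite{BoChHa}.'' The difference is in where the real work is placed. You route the crux of the reverse direction---that $\xal A G$ satisfies the basic-algebra axioms \eqref{ba1}--\eqref{ba4}---through the citation to \cite{BoChHa}, after correctly upgrading the meet-semilattice of Lemma~\ref{lemma1} to a lattice using the antitone involution $\neg$ supplied by Lemma~\ref{lemma2}. The paper instead replaces that ``tedious verification'' with a short self-contained argument: it shows that for each $a\in G$ the map $\delta_a\colon x\mapsto \neg x\cd a=\neg(x/a)$ is an antitone involution on the principal ideal $(a]$ (antitone by Lemma~\ref{lemma2}, involutive because $\delta_a(\delta_a(x))=(x/a)\cd a=x\wedge a=x$ by \eqref{div}), and then invokes the general fact that a bounded lattice with antitone involutions on principal ideals becomes a basic algebra via \eqref{baoperace2}, with the resulting $\oplus$ computing out to exactly $\neg(\neg x\cd\neg y)$. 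So the paper's route buys independence from the axiom-checking in \cite{BoChHa}, which is the stated purpose of the note, while yours buys brevity at the cost of leaning on that external computation. Your verifications of \eqref{div}, \eqref{jk}, and the two composite identities are all correct; the only cosmetic omission is that in checking $\xxal G A G=\al G$ you compare the operations but not the orders, which is immediate since $x\leq y$ iff $y/x=1$ determines the order from the division.
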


\begin{proof}
Using Lemmas \ref{lemma1} and \ref{lemma2}, this is just a reformulation of the quoted results of \cite{BoChHa}. 
The proofs given in \cite{BoChHa} consist in tedious verifying that the algebra $\xal A G$ associated with a left-residuated po-groupoid $\al G$ satisfies the axioms \eqref{ba1}--\eqref{ba4}, but if we think of basic algebras as lattices with antitone involutions on principal ideals, the proofs become almost trivial.

Let $\al G$ satisfy \eqref{div} and \eqref{jk}. For any $a\in G$, the map $\delta_a\colon x\mapsto \neg x\cd a=\neg (x/a)$ is an antitone involution on the principal ideal $(a]$. Indeed, $\neg x\cd a\leq a$ for any $x$, and for $x\leq y\leq a$ we have $\delta_a(x)=\neg x\cd a\geq\neg y\cd a=\delta_a(y)$, and also $\delta_a(\delta_a(x))=\neg(\neg x\cd a)\cd a=(x/a)\cd a=x\wedge a=x$.
Moreover, since the underlying poset of $\al G$ is a meet-semilattice by Lemma \ref{lemma1}, it follows that it is actually a lattice in which $x\vee y=\neg(\neg x\wedge\neg y)$.
Therefore, $\al G$ can be made into a basic algebra by means of \eqref{baoperace2}, i.e., the negation $\neg x=\delta_1(x)$ remains unchanged and the addition is 
$x\oplus y=\neg \delta_{\neg y}(x\wedge\neg y)=(x\wedge\neg y)/\neg y=x/\neg y=\neg (\neg x\cd\neg y)$.
But this basic algebra is precisely the $\xal A G$. 
It is easily seen that $\xxal G A G = \al G$.

On the other hand, it follows directly from \eqref{baoperace3} and \eqref{svazoperace} that for any basic algebra $\al A$, the left-residuated po-groupoid $\xal G A$ satisfies both \eqref{div} and \eqref{jk}, and $\xxal A G A = \al A$.
\end{proof}

As a particular case of Theorem \ref{thm1} we obtain the known equivalence between MV-algebras and residuated commutative po-monoids satisfying divisibility and the law of double negation. It suffices to note that $\al A$ is commutative and associative if and only if $\xal G A$ is commutative and associative, in which case $\neg x/y=(0/x)/y=0/(x\cd y)=\neg(x\cd y)$, and so \eqref{jk} is equivalent to the equation $x\cd y=\neg\neg (x\cd y)$, which is in turn the same as the law of double negation.

However, in our Theorem \ref{thm1}, the equation \eqref{jk} cannot be replaced by $\neg\neg x=x$ because there exist left-residuated po-groupoids $\al G$ satisfying both divisibility and the law of double negation, whereas the algebra $\xal A G$ is not a basic algebra.

\begin{example}
Let $\al G$ be the following left-residuated po-groupoid (we found it with help of Prover9-Mace4):

\medskip

\begin{center}
\begin{minipage}{5.5cm}
\begin{tabular}{c|cccccccc}
$\cdot$ &	$0$ & $a$ & $b$ & $c$ & $d$ & $e$ & $f$ & $1$\\ \hline
$0$ & 		$0$ & $0$ & $0$ & $0$ & $0$ & $0$ & $0$ & $0$\\
$a$ & 		$0$ & $0$ & $0$ & $0$ & $b$ & $c$ & $0$ & $a$\\
$b$ & 		$0$ & $0$ & $b$ & $c$ & $0$ & $0$ & $a$ & $b$\\
$c$ & 		$0$ & $a$ & $0$ & $0$ & $d$ & $e$ & $b$ & $c$\\
$d$ & 		$0$ & $0$ & $b$ & $c$ & $0$ & $a$ & $a$ & $d$\\
$e$ & 		$0$ & $a$ & $0$ & $a$ & $d$ & $e$ & $d$ & $e$\\
$f$ & 		$0$ & $0$ & $b$ & $c$ & $b$ & $c$ & $a$ & $f$\\
$1$ & 		$0$ & $a$ & $b$ & $c$ & $d$ & $e$ & $f$ & $1$
\end{tabular}
\end{minipage}
\quad
\begin{minipage}{3cm}
\begin{tikzpicture}[scale=0.8]
\draw (0,1.5) -- (1,0.75) -- (1,-0.75) -- (0,-1.5) -- (-1,-0.75) -- (-1,0.75) -- (0,1.5);
\draw (1,0.75) -- (-1,-0.75);
\fill (0,1.5) circle (2pt) node[above right] {$1$};
\fill (0,-1.5) circle (2pt) node[below right] {$0$};
\fill (-1,0.75) circle (2pt) node[left] {$e$};
\fill (-1,0) circle (2pt) node[left] {$c$};
\fill (1,0) circle (2pt) node[right] {$d$};
\fill (1,0.75) circle (2pt) node[right] {$f$};
\fill (-1,-0.75) circle (2pt) node[left] {$a$};
\fill (1,-0.75) circle (2pt) node[right] {$b$};
\end{tikzpicture}
\end{minipage}
\end{center}

\medskip

\noindent
We have $\neg 0=1$, $\neg a=f$, $\neg b=e$, $\neg c=c$, $\neg d=d$, $\neg e=b$, $\neg f=a$ and $\neg 1=0$, hence $\al G$ satisfies the double negation law.
It is easily seen that $\al G$ satisfies the condition (b) of Lemma \ref{lemma1}, which is equivalent to the divisibility law \eqref{div}. 
However, $\al G$ does not satisfy the equation \eqref{jk}; for instance, $c\cdot b=0$, but $c/b=e$ and so $\neg (\neg c/b)=\neg e=b$.
The algebra $\xal A G$, in which $x\oplus y=\neg (\neg x\cdot\neg y)$, is not a basic algebra. For instance, $\neg (\neg c\oplus e)\oplus e=e$ but $\neg (\neg e\oplus c)\oplus c=c$, thus $\xal A G$ does not satisfy \eqref{ba3}.
\end{example}

In what follows, we would like to present an axiomatization of left-residuated po-groupoids corresponding to basic algebras that would be based on the implication $\ra$ defined by \eqref{baoperace3}, 
i.e., $x\ra y=\neg x\oplus y$. To this end, for any left-residuated po-groupoid $\al G=(G,\leq,\cd,/,0,1)$, in addition to the already defined negation $\neg x=0/x$, we define implication by
\begin{equation}\label{implikace}
x\ra y=\neg x/\neg y.
\end{equation}
It is obvious that $x\ra 0=\neg x$ and $x\ra x=x\ra 1=1$, while $1\ra x=\neg\neg x$.

Recalling \eqref{svazoperace}, we see that all left-residuated po-groupoids corresponding to basic algebras must satisfy the equation
\begin{equation}\label{w}
(x\ra y)\ra y=(y\ra x)\ra x,
\end{equation}
and that the antitone involutions on principal filters $[a)$ must be given by $\gamma_a\colon x\mapsto x\ra a$. Now, we prove that the law of double negation and \eqref{w} are also sufficient for $\xal A G$ to be a basic algebra.

\begin{lemma}\label{lemma3}
Let $\al G$ be a left-residuated po-groupoid satisfying the double negation law and the equation \eqref{w}.
Then, for all $x,y,z\in G$, we have:
\begin{enumerate}[\indent\upshape (a)]
\item $1\ra x=x$;
\item $x\leq y\ra x$;
\item $x\leq y$ iff $\neg y\leq\neg x$ iff $x\ra y=1$;
\item if $x\leq y$, then $y\ra z\leq x\ra z$.
\end{enumerate}
\end{lemma}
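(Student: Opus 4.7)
The plan is to treat items (a)--(d) in order. Items (a) and (b) should drop out of double negation and residuation; (c) is the main obstacle; (d) is an immediate corollary of (c).

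Item (a) is a restatement of the observation $1\ra x=\neg\neg x$ already recorded in the paragraph preceding the lemma, combined with the hypothesis $\neg\neg x=x$. For (b), I would first establish the identity $x\cd\neg x=0$: by residuation, $x\cd\neg x\leq 0$ is equivalent to $x\leq 0/\neg x$, and $0/\neg x=\neg\neg x=x$ by double negation, so the inequality is trivial. Consequently, $x\leq y\ra x=\neg y/\neg x$ is, by residuation, equivalent to $x\cd\neg x\leq\neg y$, which holds because $x\cd\neg x=0$.

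The main obstacle is (c), and my plan is to rewrite \eqref{w} after substituting $(\neg x,\neg y)$ for $(x,y)$. The key observation is that, by the definition of $\ra$ and double negation, $\neg a\ra\neg b=\neg\neg a/\neg\neg b=a/b$ for all $a,b\in G$, so the substituted instance of \eqref{w} reads $(x/y)\ra\neg y=(y/x)\ra\neg x$. Now if $x\leq y$, then $y/x=1$ by part (b) of the first (unlabelled) lemma, and (a) collapses the right-hand side to $1\ra\neg x=\neg x$; hence $(x/y)\ra\neg y=\neg x$. Applying (b), which asserts $u\leq v\ra u$ for all $u,v$, with $u=\neg y$ and $v=x/y$ yields $\neg y\leq(x/y)\ra\neg y=\neg x$, which is the nontrivial direction of the first equivalence in (c). The converse implication $\neg y\leq\neg x\Rightarrow x\leq y$ follows by applying what has just been proved with $\neg x,\neg y$ in place of $x,y$ and then invoking double negation; the remaining equivalence $\neg y\leq\neg x\Leftrightarrow x\ra y=1$ is a direct unfolding via part (b) of the first lemma.

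Finally, (d) drops out as a corollary: if $x\leq y$, then $\neg y\leq\neg x$ by (c), and monotonicity of $/$ in its first argument (part (f) of the first lemma) gives $\neg y/\neg z\leq\neg x/\neg z$, i.e., $y\ra z\leq x\ra z$.
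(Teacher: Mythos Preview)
Your proof is correct and follows essentially the same route as the paper's. In particular, your treatment of (c)---substituting $(\neg x,\neg y)$ into \eqref{w} and using the identity $\neg a\ra\neg b=a/b$---is exactly the paper's argument written in a slightly different order: the paper starts from $\neg x=1\ra\neg x$, substitutes $1=y/x=\neg y\ra\neg x$, and then applies \eqref{w} and (b), which amounts to the same computation since $x/y=\neg x\ra\neg y$.
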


\begin{proof}
The proofs are but direct calculations using \eqref{implikace} and the double negation law:
\begin{enumerate}[\indent (a)]
\item $1\ra x=\neg\neg x=x$.
\item $x\leq\neg\neg x=0/\neg x$ implies $x\cd\neg x=0\leq\neg y$, whence $x\leq\neg y/\neg x=y\ra x$.
\item If $x\leq y$, then $1=y/x=\neg y\ra\neg x$, whence $\neg x=1\ra\neg x=(\neg y\ra\neg x)\ra\neg x=(\neg x\ra\neg y)\ra\neg y\geq \neg y$ by (a) and (b). Thus, $x\leq y$ iff $\neg y\leq\neg x$ iff $1=\neg x/\neg y=x\ra y$.
\item By (c), $x\leq y$ iff $\neg y\leq\neg x$, which yields $y\ra z=\neg y/\neg z\leq\neg x/\neg z=x\ra z$.
\end{enumerate}
\end{proof}

\begin{lemma}\label{lemma4}
Let $\al G$ be a left-residuated po-groupoid satisfying the double negation law and the equation \eqref{w}. Then the underlying poset of $\al G$ is a join-semilattice in which $x\vee y=(x\ra y)\ra y$. Moreover, for every $a\in G$, the map $\gamma_a\colon x\mapsto x\ra a$ is an antitone involution on the principal filter $[a)$.
\end{lemma}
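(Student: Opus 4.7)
The plan is to reduce everything to Lemma \ref{lemma3} together with the symmetry built into \eqref{w}. Specifically, I would first verify that $(x\ra y)\ra y$ is the supremum of $\{x,y\}$; the claim about $\gamma_a$ then drops out as the special case $y=a$ restricted to $x\geq a$.

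For the upper-bound part, Lemma \ref{lemma3}(b) (with the roles of ``$x$'' and ``$y$'' played by $y$ and $x\ra y$) yields $y\leq (x\ra y)\ra y$ directly, and a second application of (b) combined with \eqref{w} gives $x\leq (y\ra x)\ra x=(x\ra y)\ra y$. For the least-upper-bound part, suppose $x,y\leq z$. By Lemma \ref{lemma3}(c), $y\ra z=1$, and by Lemma \ref{lemma3}(d), $x\leq z$ forces $z\ra y\leq x\ra y$; a second application of (d) gives $(x\ra y)\ra y\leq (z\ra y)\ra y$. Combining \eqref{w} with Lemma \ref{lemma3}(a) transforms the right-hand side into $(y\ra z)\ra z=1\ra z=z$, so $x\vee y=(x\ra y)\ra y$ as required.

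For the second assertion, fix $a\in G$. By Lemma \ref{lemma3}(b), $a\leq x\ra a$ for every $x$, so $\gamma_a$ maps $[a)$ into itself. Antitonicity on $[a)$ is Lemma \ref{lemma3}(d). For involutivity, restricting the formula just proved to $y=a$ and $x\geq a$ gives $x\vee a=x$, hence $\gamma_a(\gamma_a(x))=(x\ra a)\ra a=x$. The only step requiring care is the least-upper-bound argument, where Lemma \ref{lemma3}(d) must be applied twice in succession and \eqref{w} must be inserted at precisely the right moment; no genuine obstacle arises, since Lemma \ref{lemma3} has already packaged all the facts needed.
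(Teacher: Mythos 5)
Your proof is correct and follows essentially the same route as the paper's: upper bound via Lemma \ref{lemma3}(b) together with \eqref{w}, least upper bound via a double application of Lemma \ref{lemma3}(d) combined with (a) and (c), and the antitone involution claim reduced to $x\vee a=x$ for $x\geq a$. No gaps.
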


\begin{proof}
By Lemma \ref{lemma3} (b), the element $(x\ra y)\ra y=(y\ra x)\ra x$ is an upper bound of $\{x,y\}$.
Suppose that $x,y\leq z$.
Then, by Lemma \ref{lemma3} (d), $x\leq z$ yields $x\ra y\geq z\ra y$ whence $(x\ra y)\ra y\leq (z\ra y)\ra y=(y\ra z)\ra z=1\ra z=z$ since $y\leq z$ iff $y\ra z=1$ by (c).
Thus $(x\ra y)\ra y=(y\ra x)\ra x=\sup\{x,y\}$.

For the latter statement, we fix $a\in G$. Since $x\ra a\geq a$, $\gamma_a$ is well-defined.
If $a\leq x\leq y$, then $\gamma_a(x)=x\ra a\geq y\ra a=\gamma_a(y)$, and $\gamma_a(\gamma_a(x))=(x\ra a)\ra a=x\vee a=x$. Hence $\gamma_a$ is an antitone involution on $[a)$.
\end{proof}

\begin{theorem}\label{thm2}
The assignments $\al A\mapsto \xal G A$ and $\al G\mapsto \xal A G$ establish an equivalence between basic algebras and (bounded integral) left-residuated po-groupoids satisfying the law of double negation and the equation \eqref{w}.
\end{theorem}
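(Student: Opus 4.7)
The plan is to reduce Theorem~\ref{thm2} to Theorem~\ref{thm1} by showing that any left-residuated po-groupoid $\al G$ satisfying double negation and \eqref{w} automatically satisfies \eqref{div} and \eqref{jk}. The forward direction (from a basic algebra $\al A$ to $\xal G A$) is immediate: via the translation \eqref{baoperace3}, the implication $x\ra y=\neg x\oplus y$ in $\al A$ coincides with $\neg x/\neg y$ in $\xal G A$, so \eqref{w} is just \eqref{ba3} rewritten in terms of $\ra$, and double negation is \eqref{ba2}.

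For the reverse direction, the first key step is to derive \eqref{jk}. By Lemma~\ref{lemma4}, $(x\ra\neg y)\ra\neg y=x\vee\neg y$; unfolding via \eqref{implikace} and double negation turns this into the identity $\neg(\neg x/y)/y=x\vee\neg y$. Residuation then yields $(x\vee\neg y)\cd y\leq\neg(\neg x/y)$, and the right-distributivity of $\cd$ over joins together with the elementary fact $\neg y\cd y=0$ (immediate from $\neg y\leq 0/y=\neg y$ by residuation) collapses the left-hand side to $x\cd y$. This gives one direction of \eqref{jk}. For the reverse, apply the same inequality with $x$ replaced by $\neg(x\cd y)$: this yields $\neg(x\cd y)\cd y\leq\neg((x\cd y)/y)\leq\neg x$, using $x\leq (x\cd y)/y$ together with the antitonicity of $\neg$ from Lemma~\ref{lemma3}(c); residuation and a final application of $\neg$ conclude $\neg(\neg x/y)\leq x\cd y$, as desired.

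With \eqref{jk} in hand, \eqref{div} follows cheaply. Substituting $\neg x$ for $x$ in $\neg(\neg x/y)/y=x\vee\neg y$ gives $\neg(x/y)/y=\neg x\vee\neg y=\neg(x\wedge y)$, the last equality being De Morgan's law (the antitone involution $\neg$ makes $\al G$ a lattice with $x\wedge y=\neg(\neg x\vee\neg y)$). Now \eqref{jk} applied with $x$ replaced by $x/y$ yields $(x/y)\cd y=\neg(\neg(x/y)/y)=x\wedge y$, and by symmetry the same holds with $x$ and $y$ swapped, giving \eqref{div}. Theorem~\ref{thm1} now delivers the desired equivalence, including the mutual inverseness $\xxal A G A=\al A$ and $\xxal G A G=\al G$. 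The main obstacle is the proof of \eqref{jk}; the decisive trick is to use \eqref{w} to express $x\vee\neg y$ as $\neg(\neg x/y)/y$, after which residuation and right-distributivity of $\cd$ over joins force the equality on both sides.
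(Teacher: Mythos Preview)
Your proof is correct and takes a genuinely different route from the paper's. The paper proceeds directly: by Lemma~\ref{lemma4} the underlying poset of $\al G$ is a lattice with antitone involutions $\gamma_a\colon x\mapsto x\ra a$ on each $[a)$, so \eqref{baoperace1} yields a basic algebra $\al B$; a short computation shows $x\oplus y=(\neg x\vee y)\ra y=\neg x\ra y=x/\neg y$, from which $\xal G B=\al G$ and hence $\al B=\xal A G$. You instead reduce to Theorem~\ref{thm1} by deriving \eqref{jk} and \eqref{div} from double negation and \eqref{w}, via the identity $\neg(\neg x/y)/y=x\vee\neg y$ obtained from Lemma~\ref{lemma4}. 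The paper's argument is shorter and more conceptual---it never manipulates \eqref{jk} or \eqref{div} explicitly, relying only on the lattice-with-antitone-involutions description of basic algebras. Your approach costs a bit more calculation but buys something extra: it shows, by a purely equational argument within the po-groupoid language, that the axiom systems of Theorems~\ref{thm1} and~\ref{thm2} cut out exactly the same class of left-residuated po-groupoids.
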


\begin{proof}
We know that if $\al A$ is a basic algebra, then $\xal G A$ satisfies the law of double negation as well as \eqref{w}. Clearly, $\xxal A G A = \al A$.

Conversely, let $\al G=(G,\leq,\cd,/,0,1)$ be a left-residuated po-groupoid satisfying the two equations. In view of Lemma \ref{lemma4}, the underlying poset is a lattice where $x\vee y=(x\ra y)\ra y$ and $x\wedge y=\neg(\neg x\vee\neg y)$, because it is a join-semilattice and $\gamma_0\colon x\mapsto\neg x$ is an antitone involution.
Now, $\al G$ can be made into a basic algebra, say $\al B=(G,\oplus,\neg,0)$, by \eqref{baoperace1}: $x\oplus y=\gamma_y(\neg x\vee y)=(\neg x\vee y)\ra y=((\neg x\ra y)\ra y)\ra y=(\neg x\ra y)\vee y=\neg x\ra y=x/\neg y$, and the negation in $\al B$ coincides with the one in $\al G$.
By \eqref{baoperace3}, in $\xal G B=(G,\preceq,*,{\sslash},0,1)$ we have:
$x {\sslash} y=x\oplus\neg y=x/y$, so $x\preceq y$ iff $x\leq y$, and likewise $x*y=\neg(\neg x\oplus\neg y)=x\cd y$, because for any $x,y,z\in G$, 
$x*y\leq z$ iff $x\leq z {\sslash} y=z/y$ iff $x\cd y\leq z$.
Consequently, $\xal G B = \al G$ and so $\al B = \xxal A G B = \xal A G$.
Finally, it is obvious that $\xxal G A G =\al G$.
\end{proof}
 
The implication $\ra$ plays a central role in basic algebras, but it is apparent that if we want to stick to  $\ra$ instead of $/$, we have to modify the left residuation law somehow, e.g., as follows:
By a (\emph{bounded integral}) \emph{contrapositionally residuated po-groupoid} we mean a structure $\al G=(G,\leq,\cdot,\ra,0,1)$ where $(G,\leq,0,1)$ is a bounded poset, $(G,\cdot,1)$ is a groupoid with identity, and $\ra$ is a binary operation on $G$ such that, for all $x,y,z\in G$:
\begin{enumerate}[\indent (a)]
\item $1\ra x=x$,
\item $x\cdot y\leq z$ iff $x\leq {\sim} z\ra {\sim} y$, where we write ${\sim} x$ for $x\ra 0$.
\end{enumerate}
Obviously, when we put $x/y={\sim} x \ra {\sim} y$, then $\al G' = (G,\leq,\cdot,/,0,1)$ is a left-residuated po-groupoid. 
Also, for any basic algebra $\al A=(A,\oplus,\neg,0)$, the structure $(A,\leq,\cd,\ra,0,1)$ is a contrapositionally residuated po-groupoid where ${\sim} x=\neg x$.

The equation (a), which does not follow from the ``contrapositional'' residuation law (b), guarantees that the negation ${\sim}x=x\ra 0$ in $\al G$ is the same as the negation $\neg x=0/x$ in $\al G'$, and also that $\al G$ (and $\al G'$) satisfies the law of double negation.
Indeed, ${\sim} 1=1\ra 0=0$, whence ${\sim\sim} x={\sim} x\ra 0={\sim} x\ra {\sim} 1=x/1=x$, thus ${\sim} 0=1$ and $\neg x=0/x={\sim} 0\ra {\sim} x=1\ra {\sim} x={\sim} x$. 
In fact, it is possible to replace (a) with the equation ${\sim} x=\neg x$ in the definition.

\begin{proposition}
Contrapositionally residuated po-groupoids are equivalent to left-resi\-du\-ated po-groupoids satisfying the double negation law $\neg\neg x=x$.
\end{proposition}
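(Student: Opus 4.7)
My plan is to show that the two natural constructions $\al G \mapsto \al G'$, with $x/y := {\sim} x \ra {\sim} y$, and $\al G \mapsto \al G''$, with $x \ra y := \neg x/\neg y$, are mutually inverse and land in the appropriate classes. The first is already foreshadowed in the text preceding the proposition; the proposition adds the converse and the bijectivity.

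First, suppose $\al G = (G,\leq,\cdot,\ra,0,1)$ is contrapositionally residuated. The crucial preliminary is to extract double negation from the axioms: by (a), ${\sim} 1 = 1\ra 0 = 0$, and setting $y = 1$ in (b) together with $x\cdot 1 = x$ gives $x \leq z$ iff $x \leq {\sim} z \ra {\sim} 1 = {\sim\sim} z$ for every $x$, hence ${\sim\sim} z = z$. The residuation law for $/$ is then nothing but (b) rewritten, so $\al G'$ is left-residuated. Furthermore $\neg x = 0/x = {\sim} 0 \ra {\sim} x = 1 \ra {\sim} x = {\sim} x$ using (a), so the two negations coincide and $\al G'$ satisfies $\neg\neg x = x$.

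For the converse, let $\al G = (G,\leq,\cdot,/,0,1)$ be left-residuated with $\neg\neg x = x$, and set $x \ra y := \neg x/\neg y$. Axiom (a) holds since $1\ra x = \neg 1/\neg x = 0/\neg x = \neg\neg x = x$, and the negations agree since ${\sim} x = x\ra 0 = \neg x/\neg 0 = \neg x/1 = \neg x$ by Lemma~1(e). Axiom (b) then reduces to the original residuation law, because ${\sim} z \ra {\sim} y = \neg z \ra \neg y = \neg\neg z/\neg\neg y = z/y$.

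Finally, the round trips are direct substitutions using the derived identity ${\sim} = \neg$ and ${\sim\sim} = \mathrm{id}$. Starting from a contrapositional $\al G$ and forming $\al G''$ from $\al G'$, I would compute $x\ra'' y = \neg x/\neg y = {\sim} x / {\sim} y = {\sim\sim} x \ra {\sim\sim} y = x\ra y$. The opposite round trip gives $x/'' y = {\sim} x \ra {\sim} y = \neg x \ra \neg y = \neg\neg x/\neg\neg y = x/y$. Since $\leq$, $\cdot$, $0$ and $1$ are preserved throughout, the two constructions are mutually inverse. There is no real obstacle; the only mild subtlety is the opening observation that (a) combined with $y = 1$ in (b) already forces ${\sim\sim} = \mathrm{id}$, after which the rest is routine.
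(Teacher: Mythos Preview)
Your proof is correct and follows essentially the same route as the paper: define $x/y={\sim}x\ra{\sim}y$ in one direction and $x\ra y=\neg x/\neg y$ in the other, verify that the negations ${\sim}$ and $\neg$ coincide, and then observe that the residuation laws translate into each other and the constructions invert one another. The only cosmetic difference is that you derive ${\sim\sim}z=z$ by specializing (b) at $y=1$ and comparing lower cones, whereas the paper obtains it via ${\sim\sim}x={\sim}x\ra{\sim}1=x/1=x$ using Lemma~1(e); both are equally direct.
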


\begin{proof}
Let $\al G=(G,\leq,\cdot,\ra,0,1)$ be a contrapositionally residuated po-groupoid and let $x/y={\sim} x\ra {\sim} y$.
By the above discussion, $\al G' = (G,\leq,\cd,/,0,1)$ is a left-residuated po-groupoid in which the law of double negation holds. We have $\neg x/\neg y={\sim}\neg x\ra {\sim}\neg y=x\ra y$.

Conversely, let $\al G=(G,\leq,\cdot,/,0,1)$ be a left-residuated po-groupoid that satisfies the law of double negation and let $x\ra y=\neg x/\neg y$.
Then $1\ra x=\neg 1/\neg x=0/\neg x=\neg\neg x=x$,
${\sim} x=x\ra 0=\neg x/\neg 0=\neg x/1=\neg x$ and 
${\sim} x\ra {\sim} y=\neg{\sim} x/\neg{\sim} y=x/y$. 
Hence $z\cdot y\leq x$ iff $z\leq x/y={\sim} x\ra {\sim} y$, which means that $\al G'=(G,\leq,\cdot,\ra,0,1)$ is a contrapositionally residuated po-groupoid.
\end{proof}

We end with two more remarks.

(a) In \cite{BoChHa}, special attention was devoted to commutative basic algebras. Commutative residuated \el groupoids corresponding to commutative basic algebras were characterized by means of ``skew divisibility'' $(\neg y/\neg x)\cd y = x\wedge y$ and the double negation law $\neg\neg x=x$.
Recently, a slightly different axiomatization was presented in \cite{ChHa}: $(x/y)\cd y=x\wedge y$, $x/y=\neg y/\neg x$ and $\neg\neg x=x$. In both cases, $\neg\neg x=x$ is redundant because it follows from $(\neg y/\neg x)\cd y = x\wedge y$ or $x/y=\neg y/\neg x$, respectively, by substituting $y=1$.
Moreover, it is easy to see that the skew divisibility implies the contraposition law $x/y=\neg y/\neg x$ and the standard divisibility (because $\neg y/\neg x\leq x/y$ iff $x\wedge y=(\neg y/\neg x)\cd y\leq x$), while the converse implication is trivial.

Also, when our Theorem \ref{thm1} is restricted to the commutative case, the equation \eqref{jk} can be equivalently replaced with $x/y=\neg y/\neg x$. Indeed, if a commutative residuated po-groupoid $\al G$ satisfies \eqref{jk}, then $x/y=\neg (\neg x\cd y)=\neg (y\cd\neg x)=\neg y/\neg x$. 
On the other hand, if $\al G$ satisfies the contraposition law, then $x\leq (x\cd y)/y=\neg y/\neg (x\cd y)$ implies $\neg (x\cd y)\leq\neg y/x$, 
and at the same time, $x\leq \neg y/(\neg y/x)=\neg(\neg y/x)/y$ implies $x\cd y\leq\neg (\neg y/x)$, thus $x\cd y=\neg (\neg y/x)$.

(b) It was noticed in \cite{BoChHa} that basic algebras are not \emph{right} residuated in general. 
More precisely, given a basic algebra $\al A = (A,\oplus,\neg,0)$ with the order and multiplication defined as before, there may be no binary operation $\bs$ on $A$ such that $x\cd y\leq z$ iff $y\leq x\bs z$ for all $x,y,z\in A$. The point is that the existence of such a right residuum (left division) would imply that $\leq$ is preserved by multiplication on the left, i.e., $x\leq y$ $\Rightarrow$ $z\cd x\leq z\cd y$, or equivalently, $x\leq y$ $\Rightarrow$ $z\oplus x\leq z\oplus y$. This condition, however, holds only in the so-called \emph{monotone} basic algebras; see \cite{BoKu,ChKu3,KrKu,BoKuRa}. 

The following example shows that monotonicity is a necessary condition only. The example given in \cite{BoChHa} is finite and since by \cite{BoKu}, Theorem 4.7, every finite monotone\footnote{A stronger result was proved in \cite{BoKu}: every finite basic algebra satisfying $x\leq x\oplus y$ is an MV-algebra.} basic algebra is an MV-algebra, we need to find an infinite monotone basic algebra that is not right-residuated.

\begin{example}
Let $A=[0,1]\subseteq\mathbb{R}$ be equipped with antitone involutions $\delta_a$ on $[0,a]$ as follows:
$\delta_1(x)=\sqrt{1-x^2}$, and $\delta_a(x)=a-x$ for $a<1$. 
Let $\al A$ be the basic algebra defined by \eqref{baoperace2}, i.e., $\neg x=\delta_1(x)$ and 
$x\cdot y=\neg (\neg x\oplus\neg y)=\delta_y(\neg x\wedge y)$. 
Note that $x\cd 1=x$ and $x\cd y=y-(\neg x\wedge y)=(y-\neg x)\vee 0$ for $y<1$.
 
The algebra $\al A$ is monotone. Indeed, $x<y<1$ yields $z\cdot x=(x-\neg z)\vee 0\leq (y-\neg z)\vee 0=z\cdot y$, and also $z\cdot x=(x-\neg z)\vee 0\leq z=z\cdot 1$ as $x-z\leq 1-z\leq\neg z$.

Let $x\in A\setminus\{0,1\}$ be fixed, put $y=1-\neg x$ and suppose that $x\backslash y$ exists, i.e., $z\leq x\backslash y$ iff $x\cdot z\leq y$ for all $z\in A$.
If $z=1$, then $x\cdot 1=x\nleq y$ as $x\leq y$ iff $\neg x\leq 1-x$ iff $x\in\{0,1\}$.
If $z<1$, then $x\cdot z=(z-\neg x)\vee 0\leq 1-\neg x=y$. 
Therefore, the right residuation law would yield $x\backslash y\neq 1$ and $z\leq x\backslash y$ for all $z<1$, which is a contradiction.
\end{example}


\end{document}